\newtheorem{theor}{Theorem}[section] 
\newtheorem{prop}{Proposition} \newtheorem{cor}{Corollary}
\theoremstyle{definition} 
\newtheorem{ex}{Example}[section]
\theoremstyle{remark} 
\newcommand{\pn}{\par\noindent} \newcommand{\pmn}{\par\medskip\noindent}
\begin{document}
\title{On enumeration of tree-rooted planar cubic maps. II}
\author{Yury Kochetkov}
\date{}
\begin{abstract} In work \cite{Ko} tree-rooted planar cubic maps
with marked directed edge (not in this tree) were enumerated. The
number of such objects with $2n$ vertices is $C_{2n}\cdot
C_{n+1}$, where $C_k$ is Catalan number. In this work a marked
directed edge is not demanded, i.e. we enumerate tree-rooted
planar cubic maps. Formulas are more complex, of course, but not
significantly.
\end{abstract}
\email{yukochetkov@hse.ru, yuyukochetkov@gmail.com} \maketitle

\section{Introduction}
\pn \pn Plane triangulation is a planar map, where the perimeter
of each face is three. The corresponding dual graph is
\emph{cubic}, i.e. the degree of each vertex is three. A plane
triangulation will be called \emph{proper}, if each edge is
incident to exactly \emph{two} faces. Otherwise it will be called
\emph{improper}.
\begin{ex}
\[\begin{picture}(265,70) \put(15,5){\small proper triangulation}
\put(165,5){\small improper triangulation}
\put(25,20){\line(1,0){60}} \put(25,20){\circle*{2}}
\put(85,20){\circle*{2}} \put(55,65){\circle*{2}}
\put(25,20){\line(2,3){30}} \put(85,20){\line(-2,3){30}}
\put(215,45){\oval(40,40)} \put(180,45){\circle*{2}}
\put(195,45){\circle*{2}} \put(210,45){\circle*{2}}
\put(180,45){\line(1,0){30}}
\end{picture}\] The corresponding dual graphs are presented below:
\[\begin{picture}(160,50) \put(20,25){\oval(40,40)} \put(20,5){\circle*{2}}
\put(20,45){\circle*{2}} \put(20,5){\line(0,1){40}} \put(60,22){¨}
\put(100,25){\oval(30,30)} \put(160,25){\oval(30,30)}
\put(115,25){\circle*{2}} \put(145,25){\circle*{2}}
\put(115,25){\line(1,0){30}}
\end{picture}\]
\end{ex} \pn A connected graph with marked directed edge will be called
\emph{edge-rooted}. Proper edge-rooted plane triangulations where
enumerated by Tutte in the work \cite{Tu}: the number of such
triangulations with $2n$ faces is
$$\frac{2\,(4n-3)!}{n!\,(3n-1)!}.$$ \pmn A combinatorial proof
of Tutte formula see in \cite{PS} (see also \cite{AP}). \pmn All
(proper and improper) plane edge-rooted triangulations were
enumerated in \cite{GJ} in the following way. Let $F_n$ be the
number of plane edge-rooted cubic graphs with $2n$ vertices, i.e.
the number of plane edge-rooted triangulations (proper and
improper) with $2n$ faces. Let us define numbers $f_n$,
$n\geqslant -1$, in the following way:
\begin{itemize}
    \item $f_{-1}=1/2$;
    \item $f_0=2$;
    \item $f_n=(3n+2)F_n$, $n>0$.
\end{itemize} Then
$$f_n=\frac{4(3n+2)}{n+1}\sum_{\scriptsize \begin{array}{c}
i\geqslant -1,\, j\geqslant -1\\ i+j=n-2\end{array}}
f(i)f(j).\eqno(1)$$ Thus $f_1=20, f_2=256$ and $F_1=4, F_2=32$.
However, we don't know any geometric or combinatorial proof of
this formula. \pmn
\begin{ex} It is easy to check the validity of this formula in
cases $n=1$ and $n=2$. There are two cubic maps with two vertices:
\[\begin{picture}(160,50) \put(20,25){\oval(40,40)} \put(20,5){\circle*{2}}
\put(20,45){\circle*{2}} \put(20,5){\line(0,1){40}} \put(60,22){¨}
\put(100,25){\oval(30,30)} \put(160,25){\oval(30,30)}
\put(115,25){\circle*{2}} \put(145,25){\circle*{2}}
\put(115,25){\line(1,0){30}} \end{picture}\] Orders of their
groups of automorphisms are $6$ and $2$, respectively. \pmn There
are six cubic maps with four vertices:
\[\begin{picture}(280,50) \put(40,25){\oval(40,40)} \put(40,5){\line(0,1){40}}
\put(60,25){\line(1,0){20}} \put(95,25){\oval(30,30)}
\put(5,20){\small 1)}

\put(165,25){\oval(40,40)} \put(155,8){\line(0,1){35}}
\put(175,8){\line(0,1){35}} \put(125,20){\small 2)}

\put(220,0){\line(1,0){60}} \put(220,0){\line(3,5){30}}
\put(220,0){\line(3,2){30}} \put(250,20){\line(3,-2){30}}
\put(250,20){\line(0,1){30}} \put(250,50){\line(3,-5){30}}
\put(210,20){\small 3)}
\end{picture}\]
\[\begin{picture}(310,50) \put(20,35){\circle{10}} \put(20,5){\circle{10}}
\put(40,20){\line(-4,3){16}} \put(40,20){\line(-4,-3){16}}
\put(40,20){\line(1,0){20}} \put(65,20){\circle{10}}
\put(0,17){\small 4)}

\put(110,20){\oval(20,20)} \put(120,20){\line(1,0){20}}
\put(150,20){\oval(20,20)} \put(160,20){\line(1,0){20}}
\put(190,20){\oval(20,20)} \put(85,17){\small 5)}

\put(240,20){\oval(20,20)} \put(250,20){\line(1,0){20}}
\put(290,20){\oval(40,40)} \put(290,20){\oval(20,20)}
\put(300,20){\line(1,0){10}} \put(215,17){\small 6)}
\end{picture}\] Orders of their
groups of automorphisms are $1$, $4$, $12$, $3$, $2$ and $2$,
respectively. \pmn The list of all $26$ cubic maps with six
vertices see in Appendix.
\end{ex} \pmn Let $G$ be a planar cubic map and $T$ --- a spanning tree. In
\cite{Ko} it was explained, how, using this data, to construct: a)
a polygon; b) its partition into triangles by non-intersecting
diagonals; c) a pairwise gluing of its edges that produces a curve
of genus 0 and its triangulation.

\begin{ex} Given a cubic map with a spanning tree (thick lines and curve) we
draw triangles around vertices:
\[\begin{picture}(330,40) \put(15,20){\oval(30,30)}
\put(65,20){\oval(30,30)[b]} \put(115,20){\oval(30,30)}

\put(160,17){$\Rightarrow$}

\put(215,20){\oval(30,30)} \put(265,20){\oval(30,30)[b]}
\put(315,20){\oval(30,30)} \linethickness{0.5mm}
\put(230,20){\line(1,0){20}} \put(280,20){\line(1,0){20}}
\qbezier(250,20)(250,35)(265,35) \qbezier(280,20)(280,35)(265,35)
\put(30,20){\line(1,0){20}} \put(80,20){\line(1,0){20}}
\qbezier(50,20)(50,35)(65,35) \qbezier(80,20)(80,35)(65,35)

\thinlines \put(220,20){\line(2,1){16}}
\put(220,20){\line(2,-1){16}} \put(236,12){\line(0,1){16}}
\put(260,20){\line(-2,1){16}} \put(260,20){\line(-2,-1){16}}
\put(244,12){\line(0,1){16}} \put(270,20){\line(2,1){16}}
\put(270,20){\line(2,-1){16}} \put(286,12){\line(0,1){16}}
\put(310,20){\line(-2,1){16}} \put(310,20){\line(-2,-1){16}}
\put(294,12){\line(0,1){16}} \end{picture}\] After that we a)
identify those edges of adjacent triangles that intersect the same
edge of the spanning tree --- thus we obtain a polygon; b) label
identically those sides of polygon that are intersected  by the
same edge of the map; c) delete the map:
\[\begin{picture}(270,100) \put(10,25){\line(2,-1){40}}
\put(10,25){\line(2,1){80}} \put(50,5){\line(0,1){80}}
\put(50,5){\line(2,1){40}} \put(50,85){\line(2,-1){40}}
\put(90,25){\line(0,1){40}} \put(50,45){\line(2,-1){40}}
\linethickness{0.5mm} \put(30,25){\line(1,0){30}}
\qbezier(60,25)(60,45)(80,45) \qbezier(80,45)(75,60)(60,65)
\thinlines \put(15,25){\oval(30,30)} \put(45,80){\oval(40,40)}
\put(80,25){\oval(40,40)[b]} \put(80,25){\oval(40,40)[tr]}

\put(150,47){$\Rightarrow$}

\put(200,50){\line(1,1){20}} \put(200,50){\line(1,-1){20}}
\put(220,70){\line(0,-1){40}} \put(220,70){\line(3,-4){30}}
\put(220,70){\line(5,-2){50}} \put(220,70){\line(1,0){30}}
\put(220,30){\line(1,0){30}} \put(250,30){\line(1,1){20}}
\put(250,70){\line(1,-1){20}}

\put(205,62){\small a} \put(205,35){\small a} \put(233,73){\small
b} \put(234,22){\small c} \put(260,63){\small b}
\put(260,33){\small c} \end{picture}\] If we have a polygon: a)
triangulated by non-intersecting diagonals; b) its sides are
pairwise identified in such way, that this identification produces
a genus 0 curve, then, using these data, we can construct a planar
tree-based cubic map:
\[\begin{picture}(350,100) \put(0,50){\line(2,3){20}}
\put(0,50){\line(2,-3){20}} \put(20,20){\line(1,0){40}}
\put(20,20){\line(2,1){60}} \put(20,20){\line(0,1){60}}
\put(20,80){\line(1,0){40}} \put(20,80){\line(2,-1){60}}
\put(60,20){\line(2,3){20}} \put(60,80){\line(2,-3){20}}
\put(5,67){\small a} \put(5,30){\small a} \put(38,10){\small b}
\put(38,83){\small b} \put(72,68){\small c} \put(72,30){\small c}

\put(95,47){$\Rightarrow$}

\put(125,50){\line(2,3){20}} \put(125,50){\line(2,-3){20}}
\put(145,20){\line(1,0){40}} \put(145,20){\line(2,1){60}}
\put(145,20){\line(0,1){60}} \put(145,80){\line(1,0){40}}
\put(145,80){\line(2,-1){60}} \put(185,20){\line(2,3){20}}
\put(185,80){\line(2,-3){20}}

\linethickness{0.5mm}

\put(135,50){\line(1,0){30}} \qbezier(165,50)(175,60)(185,70)
\qbezier(165,50)(175,40)(185,30) \thinlines
\put(125,50){\oval(20,20)} \put(185,50){\oval(60,40)[r]}
\qbezier(185,70)(175,80)(185,90) \qbezier(185,30)(175,20)(185,10)
\qbezier(185,90)(195,100)(215,90) \qbezier(185,10)(195,0)(215,10)
\qbezier(215,90)(235,80)(235,50) \qbezier(215,10)(235,20)(235,50)

\put(250,47){$\Rightarrow$}

\put(280,50){\oval(20,20)} \put(330,50){\oval(40,40)[r]}
\put(330,30){\line(0,1){40}}

\linethickness{0.5mm}

\put(290,50){\line(1,0){20}} \qbezier(310,50)(310,70)(330,70)
\qbezier(310,50)(310,30)(330,30)
\end{picture}\]
\end{ex}  \pmn In what follows by $n$-polygon we understand a
polygon: a) with $2n$ sides; b) triangulated by non-intersecting
diagonals; b) with pairwise identified sides in such way, that
this identification produces a genus 0 curve. We see that the
enumeration of tree-rooted planar cubic maps is equivalent to the
enumeration of such objects. In work \cite{Ko} we additionally
mark a side of a polygon ( = we mark a directed edge in a cubic
map that does not belong to the spanning tree). The number of
$n$-polygons with a marked side is $C_n\cdot C_{2n-2}$, where
$C_k$ is the Catalan number. \pmn In this work we drop a marked
side, i.e. we enumerate $n$-polygons (Theorem 4.1).

\section{Triangulations and symmetries}
\pn
\begin{prop} Let $T$ be a $\mathbb{Z}_k$-symmetric triangulation
of a regular polygon $P$ with $n=km$ sides, i.e. $T$ is invariant
with respect to rotations on angles $2\pi\,i/k$, $i=1,\ldots,k-1$.
Then $k=2$ or $k=3$. \end{prop}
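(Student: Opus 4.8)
The plan is to let $\rho$ denote the rotation of $P$ by $2\pi/k$ (we may assume $k\ge 2$, since for $k=1$ the hypothesis is vacuous) and to study the induced action of the cyclic group $\langle\rho\rangle\cong\mathbb{Z}_k$ on the set of $n-2$ triangles of $T$. Two elementary facts drive everything: $\rho$, and every nontrivial power of it, fixes only the centre of $P$, so it moves every vertex and all vertex-orbits have size $k$; and any isometry that maps a triangle of $T$ onto itself must permute its three vertices among themselves. The natural case split is whether or not some triangle of $T$ has a nontrivial stabiliser in $\langle\rho\rangle$.

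If every triangle has trivial stabiliser, then every $\langle\rho\rangle$-orbit of triangles has size exactly $k$, so $k$ divides the number of triangles $n-2=km-2$; since $k\mid km$ this forces $k\mid 2$, hence $k=2$.

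Otherwise some triangle $\Delta$ is fixed by a nontrivial $\sigma\in\langle\rho\rangle$. As $\sigma$ fixes no vertex but permutes the three vertices of $\Delta$, it acts on them as a $3$-cycle; hence $\sigma^{3}$ fixes a vertex and is therefore the identity, so $\sigma$ has order $3$, $3\mid k$, and the vertices of $\Delta$ have the form $v_{j},v_{j+n/3},v_{j+2n/3}$ --- an inscribed ``equilateral'' triangle. The key step, and the one I expect to carry the real content, is the lemma that $T$ contains at most one such inscribed triangle: two distinct ones would supply two diagonals with interleaved endpoints, i.e. crossing diagonals, contradicting that $T$ is a triangulation. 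Granting this, $\Delta$ is the unique inscribed triangle of $T$, and since $\rho$ sends inscribed triangles of $T$ to inscribed triangles of $T$ we get $\rho(\Delta)=\Delta$; rerunning the $3$-cycle argument with $\sigma$ replaced by $\rho$ shows $\rho$ itself has order $3$, i.e. $k=3$. Combining the two cases gives $k\in\{2,3\}$. The subtlety to watch is exactly the uniqueness lemma: without it one only learns that $\Delta$ has a stabiliser of order $3$, which a priori permits $k=3d$ with $d>1$, and it is the non-crossing condition that rules this out; the rest is routine bookkeeping about $\mathbb{Z}_k$-actions.
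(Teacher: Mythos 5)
Your proof is correct, but it takes a genuinely different route from the paper's. The paper argues by peeling: since a triangulation of an $n$-gon has $n-2$ triangles, some triangle is an ``ear'' (its obtuse angle is an angle of $P$, i.e.\ two of its sides are sides of $P$); its $\mathbb{Z}_k$-orbit consists of $k$ such ears, and deleting them leaves a $\mathbb{Z}_k$-symmetrically triangulated polygon with $k(m-1)$ sides. Iterating $m-1$ times reduces everything to a $k$-gon carrying a $\mathbb{Z}_k$-symmetric triangulation, which is possible only for $k\in\{2,3\}$. You instead analyse the $\mathbb{Z}_k$-action on the set of triangles directly: if the action is free then $k\mid n-2$ together with $k\mid n$ gives $k\mid 2$; otherwise a stabilised triangle must be the inscribed triangle on $v_j,v_{j+n/3},v_{j+2n/3}$, and your uniqueness lemma (two such triangles would give crossing diagonals) upgrades the order-$3$ stabiliser to $\rho(\Delta)=\Delta$ and hence to $k=3$. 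Your version is more careful at the delicate point: the paper's induction implicitly assumes the $k$ rotated ears are distinct and that their removal leaves a single symmetric triangulated polygon, and it leaves the final step (why a $k$-gon admits a $\mathbb{Z}_k$-symmetric triangulation only for $k\le 3$) as an assertion, whereas your orbit count and crossing argument need no such bookkeeping and explicitly kill the possibility $k=3d$, $d>1$. The paper's peeling picture, on the other hand, is shorter and sets up the two corollaries that follow (counting symmetric triangulations via a big diagonal or a central symmetric triangle), objects which your case analysis identifies but does not go on to enumerate.
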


\begin{proof} Let $R$ be the rotation on angle $2\pi/k$. As
numbers of triangles in the triangulation $T$ is $n-2$, then there
exists a triangle $\triangle$ in $T$ whose obtuse angle is an
angle of $P$. As triangulation $T$ is $\mathbb{Z}_k$-symmetric
then there $k$ copies of triangle $\triangle$ in $T$ and their
configuration is symmetric with respect to rotation $R$. \pmn If
we delete these triangles from polygon $P$, then we will obtain
new polygon $P_1$ with $k(m-1)$ sides with triangulation $T_1$,
which is also $\mathbb{Z}_k$-symmetric. \pmn If we continue the
process, then after $m-1$ steps we will obtain a polygon with $k$
sides and with $\mathbb{Z}_k$-symmetric triangulation. But it is
possible only when $k=2$ or $k=3$. \end{proof}

\begin{cor} Let $P$ be a regular polygon with $2n$
sides, one of them --- marked. Then the number of its
$\mathbb{Z}_2$-symmetric triangulations is $n\cdot C_{n-1}$.
\end{cor}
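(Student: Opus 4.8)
The plan is to identify each $\mathbb{Z}_2$-symmetric triangulation of $P$ (regarded as its set of non-crossing diagonals) with a pair consisting of a \emph{main diagonal} together with a triangulation of one half of $P$, where a main diagonal is a diagonal joining a vertex of $P$ to the antipodal one. Since $P$ has $2n$ sides there are exactly $n$ main diagonals, and each one cuts $P$ into two polygons with $n+1$ sides. Granting this identification, the count is immediate: a polygon with $n+1$ sides has $C_{(n+1)-2}=C_{n-1}$ triangulations, the main diagonal is recovered from the triangulation (so the $n$ families are disjoint), and the marked side makes $P$ rigid, so diagonal sets are counted without identifying under symmetries of $P$; hence the total is $n\cdot C_{n-1}$.

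The structural core is to show that every $\mathbb{Z}_2$-symmetric triangulation $\mathcal{T}$ contains exactly one main diagonal. Let $R$ be the rotation of $P$ by $\pi$ (so $R$ generates $\mathbb{Z}_2$); then $R$ fixes the center $O$ of $P$ and fixes no vertex and no side. The point $O$ lies in some closed triangle of $\mathcal{T}$. It cannot lie in the interior of a triangle $\triangle\in\mathcal{T}$: such a $\triangle$ would be invariant under $R$, but $R$ has order $2$ and $\triangle$ has three vertices, none of which $R$ fixes, which is impossible. So $O$ lies on the boundary of some triangle of $\mathcal{T}$; it is not on a side of $P$, being interior, and diagonals of a triangulation meet only at vertices of $P$, so $O$ lies in the relative interior of a unique diagonal $d\in\mathcal{T}$. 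From $R(d)=d$, together with the fact that $R$ fixes no vertex, one gets that $d$ joins a vertex to its antipode, i.e.\ $d$ is a main diagonal; and two distinct main diagonals would both pass through $O$ and hence cross, so $d$ is unique.

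It then remains to note that cutting $P$ along its main diagonal $d$ gives two $(n+1)$-gons $P'$ and $P''=R(P')$, that the diagonals of $\mathcal{T}$ other than $d$ split into those contained in $P'$ and those contained in $P''$ with $R$ mapping the first set onto the second, and that, conversely, for any main diagonal $d$ and any triangulation $S$ of $P'$ the set $\{d\}\cup S\cup R(S)$ is a $\mathbb{Z}_2$-symmetric triangulation with main diagonal $d$. Thus symmetric triangulations with a fixed main diagonal $d$ correspond bijectively to triangulations of $P'$, giving $C_{n-1}$ for each of the $n$ main diagonals and $n\cdot C_{n-1}$ in all. The step I expect to require the most care is the geometric argument that $O$ cannot be interior to a triangle and hence lies on an actual diagonal fixed by $R$; once this is in place the rest is bookkeeping, with the marked side entering only to guarantee that we are counting labeled triangulations.
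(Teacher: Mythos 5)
Your proposal is correct and follows the same decomposition as the paper's one-line proof: choose one of the $n$ big (main) diagonals and one of the $C_{n-1}$ triangulations of a half. The only difference is that you supply the justification the paper leaves implicit, namely that every $\mathbb{Z}_2$-symmetric triangulation contains exactly one main diagonal (via the argument about the center not lying inside any triangle).
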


\begin{proof} Indeed, there are $n$ ways to choose a big diagonal
and $C_{n-1}$ ways to triangulate a half of the polygon.
\end{proof}

\begin{cor} Let $P$ be a regular polygon with $3n$
sides, one of them --- marked. Then the number of its
$\mathbb{Z}_3$-symmetric triangulations is $n\cdot C_{n-1}$.
\end{cor}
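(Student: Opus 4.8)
The plan is to mimic the proof of Corollary 2.2, but using the structural information extracted in the proof of Proposition 2.1. Label the vertices of $P$ by $0,1,\dots,3n-1$ in cyclic order and let $R$ denote the rotation by $2\pi/3$, so that $R$ sends vertex $j$ to vertex $j+n$ (indices mod $3n$). First I would locate the center $O$ of $P$ with respect to the triangulation $T$. The point $O$ cannot lie on a diagonal $d$ of $T$: otherwise $d$, $R(d)$ and $R^2(d)$ would be three distinct chords all passing through $O$, and two of them would cross in the interior of $P$, contradicting the fact that the diagonals of $T$ are non-intersecting. Hence $O$ lies in the interior of a unique triangle $\triangle_0$ of $T$.

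Since $T$ is $\mathbb{Z}_3$-symmetric and $O$ is fixed by $R$, the triangle $R(\triangle_0)$ is again a triangle of $T$ whose interior contains $O$, so $R(\triangle_0)=\triangle_0$. Therefore the three vertices of $\triangle_0$ form a single $R$-orbit, i.e.\ they are $i$, $i+n$, $i+2n$ for some $i$, and there are exactly $n$ such triangles, one for each $i\in\{0,1,\dots,n-1\}$. This plays the role of the ``$n$ ways to choose a big diagonal'' in Corollary 2.2.

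Next, removing $\triangle_0$ from $P$ leaves three regions (``flaps''): the flap bounded by the chord $\{i,i+n\}$ together with the boundary path $i,i+1,\dots,i+n$ is a convex $(n+1)$-gon, and $R$ permutes the three flaps cyclically. The triangulation $T$ restricts to a triangulation of each flap, and by $\mathbb{Z}_3$-symmetry these three restrictions are $R$-images of one another; conversely, any triangulation of one fixed flap extends $R$-equivariantly to the other two and, together with $\triangle_0$, yields a $\mathbb{Z}_3$-symmetric triangulation of $P$. Thus, for a fixed central triangle, the $\mathbb{Z}_3$-symmetric triangulations of $P$ are in bijection with triangulations of an $(n+1)$-gon, of which there are $C_{n-1}$.

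Finally I would assemble the count: the map sending a $\mathbb{Z}_3$-symmetric triangulation $T$ to the pair consisting of its central triangle $\triangle_0$ and the restriction of $T$ to one fixed flap is a bijection onto pairs (one of the $n$ central triangles, one of the $C_{n-1}$ triangulations of the flap), so the number of $\mathbb{Z}_3$-symmetric triangulations is $n\cdot C_{n-1}$. The marked side serves only to rigidify $P$, so that distinct triangulations are counted as distinct (a marked polygon has no nontrivial rotational self-symmetry). The step requiring the most care is the first one — excluding the possibility that $O$ lies on a diagonal and deducing that the triangle through $O$ is $R$-invariant with vertices spaced $n$ apart — after which the argument parallels the $\mathbb{Z}_2$ case verbatim.
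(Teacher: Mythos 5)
Your proof is correct and follows essentially the same route as the paper: choose the $\mathbb{Z}_3$-symmetric central triangle ($n$ ways), then triangulate one of the three resulting $(n+1)$-gon flaps ($C_{n-1}$ ways) and propagate by symmetry. The only difference is that you carefully justify the existence and uniqueness of the $R$-invariant central triangle via the center $O$, a point the paper simply asserts.
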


\begin{proof} Indeed, there are $n$ ways to choose a
$\mathbb{Z}_3$-symmetric triangle in $P$. The complement of this
triangle consists of three symmetrically positioned polygons with
$n+1$ sides each. There is $C_{n-1}$ ways to triangulate one of
them. \end{proof}

\section{Glueings and symmetries}
\pn
\begin{prop}  Let $P$ be a regular polygon with $2n$
sides, one of them --- marked and let $d_n$ the number of such
$\mathbb{Z}_2$-symmetric pairwise glueings of its sides, that
produce a genus zero curve. Then
$$d_n=\begin{cases}\binom{2k}{k}, \;\;\;\text{ if $n=2k$,}\\ \frac 12\,
\binom{2k}{k},\text{ if $n=2k-1$.} \end{cases}$$
\end{prop}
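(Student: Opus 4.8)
The plan is to recast the count as one of non-crossing matchings, then to parametrize the symmetric ones by a structure on ``half'' of the polygon, and finally to count those half-structures with a generating function.

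First I would recall the translation already used in \cite{Ko}. To glue the $2n$ sides of $P$ in pairs so that the resulting curve is orientable, each pair must be identified in the orientation-reversing way; and then, since the glued object has $n$ edges and one face, Euler's formula $2-2g=V-n+1$ shows that genus zero is obtained exactly when the matching of the sides is non-crossing. Writing $R$ for the half-turn of $P$, which acts on the sides by $s_i\mapsto s_{i+n}$ (indices mod $2n$), a gluing is $\mathbb Z_2$-symmetric precisely when its matching is $R$-invariant. As one side, say $s_0$, is marked, $d_n$ is simply the number of $R$-invariant non-crossing perfect matchings $M$ of the sides $s_0,\dots,s_{2n-1}$; it will be convenient to speak of the halves $A=\{s_0,\dots,s_{n-1}\}$ and $B=\{s_n,\dots,s_{2n-1}\}$, which $R$ interchanges.

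The key step is a bijection between such matchings $M$ and what I will call half-structures on $A$: non-crossing partial matchings of $s_0,\dots,s_{n-1}$ in which no unmatched side lies inside a matched pair. Equivalently, reading these sides left to right, a half-structure is a (possibly empty) block of matched sides, then a single unmatched side, then a block, then an unmatched side, and so on, finishing with a block, each block being filled in by a non-crossing perfect matching of its sides. Given $M$, I would take the restriction $M|_A$: it is a half-structure, because an unmatched side of $A$ has its partner in $B$, so were it enclosed by a chord of $A$ that chord would be crossed. Conversely, given a half-structure whose unmatched sides occupy positions $a_1<\dots<a_h$, I would reflect the whole picture by $R$ onto $B$ and join the two halves by the unique non-crossing rainbow $a_i\leftrightarrow a_{h+1-i}+n$. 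One checks that the rainbow chords are nested inside one another and cross no block-chord, so the result is a genuine $R$-invariant non-crossing matching, and that $M|_A$ recovers the half-structure, so the two constructions are mutually inverse. I would also observe that the rainbow contains the diameter $\{a_{(h+1)/2},\,a_{(h+1)/2}+n\}$ exactly when $h$ is odd; this is the origin of the two cases in the statement, and is the gluing analogue of the ``big diagonal'' in a $\mathbb Z_2$-symmetric triangulation.

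Finally I would count the half-structures. A block of $2t$ sides can be filled in $C_t$ ways, so a block contributes the series $\sum_{t\ge0}C_tx^{2t}=C(x^2)$, where $C(y)=\frac{1-\sqrt{1-4y}}{2y}$, and an unmatched side contributes a factor $x$; hence the generating function for half-structures is
$$\sum_{h\ge0}C(x^2)^{h+1}x^{h}=\frac{C(x^2)}{1-xC(x^2)},$$
and $d_n$ is its coefficient of $x^n$. A short simplification turns this series into $\frac{2}{1-2x+\sqrt{1-4x^2}}$, whose coefficient of $x^n$ is the central binomial coefficient $\binom{n}{\lfloor n/2\rfloor}$: this equals $\binom{2k}{k}$ when $n=2k$ and $\frac12\binom{2k}{k}$ when $n=2k-1$, which is the claim. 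I expect the main difficulty to lie in the bijection of the previous paragraph --- in showing that an $R$-invariant non-crossing matching carries no freedom beyond a non-crossing structure on a single half, and in verifying carefully that the reflect-and-rainbow construction never creates a crossing.
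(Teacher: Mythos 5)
Your argument is correct, and it reaches the stated formula by a genuinely different route from the paper. The paper fixes the marked side $AB$, its antipode $CD$, and their gluing partners $KL$, $MN$, and peels off the $2k$ sides trapped between $AB$ and $KL$ (glued among themselves in $C_k$ ways, with the mirror-image gluing forced between $CD$ and $MN$); this yields the convolution recurrence $d_n=2\sum_j d_{n-2j-2}C_j+\cdots$ and hence the functional equation $D(t)\bigl(1-2t^2C(t^2)\bigr)=1+\tfrac{1}{2t}$, which is then solved. You instead prove a structural lemma: an $R$-invariant non-crossing matching is determined by its restriction to one half, because the cross-chords between the two halves are forced to be the unique (and automatically $R$-invariant) rainbow $a_i\leftrightarrow a_{h+1-i}+n$; this reduces the count to ``half-structures'' with generating function $C(x^2)/(1-xC(x^2))$. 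Both computations agree ($d_1=1$, $d_2=2$, $d_3=3$, $d_4=6$), and your closed form $\binom{n}{\lfloor n/2\rfloor}$ packages the paper's two cases uniformly. What your approach buys: it is fully bijective rather than recursive, it explains the parity dichotomy structurally (the diameter chord present exactly when $h$ is odd, mirroring the ``big diagonal'' of Corollary~1), and it avoids the slightly delicate boundary terms $2C_{(n-2)/2}$ versus $C_{(n-1)/2}$ in the paper's recurrence. What it costs: the rainbow/half-structure lemma needs the careful crossing checks you flag, whereas the paper's decomposition is immediate once one accepts the same standard fact you both use implicitly, namely that a pairwise gluing of a $2n$-gon has genus zero exactly when its chord diagram is non-crossing (your appeal to Euler's formula alone does not prove this equivalence, but it is classical and the paper's count of $C_k$ gluings rests on it too). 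One small stylistic caution: you write $e_n$-style generating function identities tersely at the end; when splicing this in, it is worth displaying the simplification to $2/(1-2x+\sqrt{1-4x^2})$ explicitly, since that step is where a reader would want to check the algebra.
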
.
\pmn \emph{Proof.} Let us consider a $\mathbb{Z}_2$-symmetric zero
genus glueing of $P$. Let $AB$ be the marked side, $CD$ --- the
opposite side, $KL$ --- the glueing partner of $AB$ and $MN$ ---
the glueing partner of $CD$ (sides $KL$ and $MN$ are opposite). In
the counterclockwise going around of $P$ one can meet the side
$KL$ before $CD$ or after it. We will consider the first case and
then multiply by two. The number of sides between $AB$ and $KL$ is
even $=2k$. We can choose any zero genus pairwise glueing of sides
between $AB$ and $KL$ (there are $C_k$ of them), made the
symmetric glueing of sides between $CD$ and $MN$ and choose a
$\mathbb{Z}_2$-symmetric glueing of remaining sides (there are
$d_{n-k-2}$ of them). Thus,
$$d_n=2d_{n-2}+2d_{n-4}C_1+2d_{n-6}C_2+\ldots+2d_2C_{(n-4)/2}+2C_{(n-2)/2},$$
if $n$ is even, and
$$d_n=2d_{n-2}+2d_{n-4}C_1+\ldots+2d_1C_{(n-3)/2}+C_{(n-1)/2},$$
if $n$ is odd. \pmn It means that
$$\left(\frac{1}{2t}+1+d_1t+d_2t^2+\ldots\right)\times
(2t^2+C_1t^4+2C_2t^6+\ldots)=d_1t+d_2t^2+d_3t^3+\ldots$$ Let
$$D(t)=\frac{1}{2t}+1+d_1 t+d_2t^2+\ldots$$ and let $C(t)$ be the
generating function for Catalan numbers, then
$$2t^2C(t^2)D(t)=D(t)-1-1/2t,\text{ or }D(t)(1-2t^2C(t^2))=1+1/2t.$$ As
$C(t)=(1-\sqrt{1-4t}\,\,)/2t$, then
$$D(t)=\left(1+\frac{1}{2t}\right)(1-4t^2)^{-1/2}=
\left(1+\frac{1}{2t}\right)\sum_{i=0}^\infty \binom{2i}{i}
t^{2i}.\qquad\Box$$

\begin{prop}  Let $P$ be a regular polygon with $6n$
sides, one of them --- marked and let $e_n$ the number of such
$\mathbb{Z}_3$-symmetric pairwise glueings of its sides, that
produce a genus zero curve. Then $e_n=\binom{2n}{n}$.\end{prop}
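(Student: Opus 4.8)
\pmn
The plan is to imitate the proof of Proposition 3.1: extract a recursion for $e_n$ by examining the glueing partner of the marked side, and then solve it with generating functions. First I would label the sides of $P$ cyclically $1,\dots,6n$ with side $1$ marked, normalized so that the generator $R$ of $\mathbb Z_3$ sends side $i$ to side $i+2n$ (indices mod $6n$). Fix a $\mathbb Z_3$-symmetric genus-zero glueing and let side $p$ be the partner of side $1$. Exactly as in Proposition 3.1, the genus-zero condition forces the sides lying strictly between $1$ and $p$ on each of the two boundary arcs to be glued among themselves; in particular each arc contains an even number of sides, so $p$ is even.

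By $\mathbb Z_3$-symmetry the chord $c_1=\{1,p\}$ is carried by $R$ to $c_2=\{1+2n,\,p+2n\}$ and $c_3=\{1+4n,\,p+4n\}$, and since two sides may not be glued "across" a separating chord, $c_1,c_2,c_3$ must be pairwise non-crossing. One checks directly when $c_1$ meets $c_2$ and when $c_1$ meets $c_3$ (both happen, for our even $p$, precisely when $2n+2\le p\le 4n$), so that pairwise non-crossing is equivalent to $p\in\{2,4,\dots,2n\}\cup\{4n+2,\dots,6n\}$ — the "middle third" of positions being excluded. The reflection $\rho\colon i\mapsto 6n+2-i$ fixes side $1$, satisfies $\rho R\rho^{-1}=R^{-1}$ (so it preserves $\mathbb Z_3$-symmetry and genus zero), and interchanges the two admissible ranges of $p$; being an involution it thus gives a bijection between the glueings with $p\le 2n$ and those with $p\ge 4n+2$, whence $e_n=2\cdot\#\{\text{glueings with }p\le 2n\}$.

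Next, for $p=2j+2$ with $0\le j\le n-1$, the three chords cut $P$ into three congruent "caps" together with one $R$-invariant central region. The cap bounded by $c_1$ carries the $2j$ sides $2,\dots,p-1$, and the glueing induced there is an arbitrary genus-zero glueing of a $2j$-gon, i.e.\ one of $C_j$; the caps bounded by $c_2,c_3$ are its $R$-images and carry no further choice. Contracting the three glued-up (hence inert) chord-sides, the central region becomes a $6(n-1-j)$-gon on which $R$ acts in the standard way, and the glueing induced there is an arbitrary $\mathbb Z_3$-symmetric genus-zero glueing, of which there are $e_{n-1-j}$ (with $e_0=1$). This yields $e_n=2\sum_{j=0}^{n-1}C_j\,e_{n-1-j}$. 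Setting $E(t)=\sum_{n\ge0}e_nt^n$ this becomes $E(t)=1+2t\,C(t)E(t)$, so $E(t)\big(1-2tC(t)\big)=1$; since $2tC(t)=1-\sqrt{1-4t}$ one gets $E(t)=(1-4t)^{-1/2}=\sum_n\binom{2n}{n}t^n$, which is the assertion.

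The main obstacle is the geometric bookkeeping of the two middle steps: showing cleanly that the genus-zero constraint together with $\mathbb Z_3$-symmetry forces exactly the stated range of $p$, and that after removing the three caps the central region really is a smaller instance of the same counting problem — in particular that deleting the three chord-sides leaves a $6(n-1-j)$-gon carrying the standard $\mathbb Z_3$-action and does not disturb the cyclic order. Once that is pinned down, the recursion and the generating-function computation are routine; one may sanity-check the first values $e_1=2$, $e_2=6$, $e_3=20$.
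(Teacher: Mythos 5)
Your argument is correct and takes essentially the same route as the paper: the paper's proof consists of the single remark that ``the same reasoning'' as in Proposition 3.1 yields the recursion $e_n=2\bigl(e_{n-1}+e_{n-2}C_1+\ldots+e_1C_{n-2}+C_{n-1}\bigr)$, followed by the identical generating-function step $E(t)=(1-2tC(t))^{-1}=(1-4t)^{-1/2}$. You have simply written out in full the geometric bookkeeping (the admissible positions of the partner side, the origin of the factor $2$, and the reduction of the central region to a smaller instance) that the paper leaves implicit.
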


\begin{proof} The same reasoning gives the relation
$$2(e_{n-1}+e_{n-1}C_1+\ldots+e_1C_{n-2}+C_{n-1})=e_n,$$ i.e.
$2tC(t)E(t)=E(t)-1$, where $E(t)=1+e_1t+e_2t^2+\ldots$. Thus,
$$E(t)=\frac{1}{1-2tE(t)}=(1-4t)^{-1/2}=\sum_{i=0}^\infty
\binom{2i}{i}t^i.$$\end{proof}

\section{Enumeration formulas}
\pn It remains to apply Burnside formula.

\begin{theor} Let $t_n$ be the number of tree-rooted planar cubic
maps with $2n$ vertices. Then
$$(2n+2)t_n=\begin{cases} C_{n+1}C_{2n}+\frac 12\,(n+1)\,C_n\cdot\binom{2k}{k},
\text{ if $n+1=2k-1$ and $n+1\neq 3l$},\\
C_{n+1}C_{2n}+(n+1)\,C_n\cdot \binom{2k}{k}, \quad\text{if
$n+1=2k$
and $n+1\neq 3l$},\\
C_{n+1}C_{2n}+4(2k-1)\,C_{4k-3}\cdot\binom{4k-2}{2k-1}+\frac
12\,(n+1)\,C_n\cdot
\binom{6k-2}{3k-1},\\ \hspace{47mm}\text{ if $n+1=3(2k-1)$},\\
C_{n+1}C_{2n}+8k\,C_{4k-1}\cdot\binom{4k}{2k}+(n+1)\,C_n\cdot\binom{6k}{3k},
\text{ if $n+1=6k$}. \end{cases}$$
\end{theor}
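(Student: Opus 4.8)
The plan is to apply Burnside's lemma to the action of the cyclic rotation group $\mathbb{Z}_{2n+2}$ on the set $X$ of all structures carried by a fixed regular $(2n+2)$-gon that make it an $(n+1)$-polygon: a triangulation by noncrossing diagonals \emph{together with} a genus-zero pairwise gluing of its $2n+2$ sides. By the construction recalled in the introduction and in \cite{Ko}, such a structure is precisely an $(n+1)$-polygon with one distinguished side, so $|X|=C_{n+1}C_{2n}$, and the isomorphism classes of tree-rooted planar cubic maps with $2n$ vertices are exactly the $\mathbb{Z}_{2n+2}$-orbits on $X$, an isomorphism of maps corresponding to a rotation of the polygon. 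Hence Burnside gives
$$(2n+2)\,t_n=\sum_{g\in\mathbb{Z}_{2n+2}}|\mathrm{Fix}_X(g)|=C_{n+1}C_{2n}+\sum_{g\neq e}|\mathrm{Fix}_X(g)|,$$
and the task reduces to evaluating the fixed-point sets of the nontrivial rotations.

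Next I would note that $X$ is the \emph{diagonal} product of two $\mathbb{Z}_{2n+2}$-sets, the set $\mathcal T$ of triangulations (with $|\mathcal T|=C_{2n}$) and the set $\mathcal G$ of genus-zero gluings (with $|\mathcal G|=C_{n+1}$), there being no constraint relating the two coordinates; consequently $|\mathrm{Fix}_X(g)|=|\mathrm{Fix}_{\mathcal T}(g)|\cdot|\mathrm{Fix}_{\mathcal G}(g)|$ for every $g$. If $g\neq e$ and $\mathrm{Fix}_{\mathcal T}(g)\neq\emptyset$, then the cyclic group $\langle g\rangle$, of order $k=\mathrm{ord}(g)$, preserves a triangulation of the $(2n+2)$-gon, so that triangulation is $\mathbb{Z}_k$-symmetric; by Proposition 2.1 this forces $k\in\{2,3\}$. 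Thus the only nontrivial rotations that contribute are the unique element $r$ of order $2$ (which exists since $2n+2$ is even) and, when $3\mid n+1$, the two elements $s,s^{2}$ of order $3$.

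For $r$: Corollary 2.1 (applied to the $(2n+2)$-gon) gives $|\mathrm{Fix}_{\mathcal T}(r)|=(n+1)C_n$ and Proposition 3.1 gives $|\mathrm{Fix}_{\mathcal G}(r)|=d_{n+1}$, so $|\mathrm{Fix}_X(r)|=(n+1)\,C_n\,d_{n+1}$. When $3\mid n+1$, write $n+1=3l$; Corollary 2.2 (with $3m=2n+2$, i.e. $m=2l$) gives $|\mathrm{Fix}_{\mathcal T}(s)|=2l\,C_{2l-1}$, and Proposition 3.2 (with $6m=2n+2$, i.e. $m=l$) gives $|\mathrm{Fix}_{\mathcal G}(s)|=\binom{2l}{l}$, and the same holds for $s^2$ since $\langle s\rangle=\langle s^2\rangle$. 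Altogether
$$(2n+2)\,t_n=C_{n+1}C_{2n}+(n+1)\,C_n\,d_{n+1}+\varepsilon\cdot 4l\,C_{2l-1}\binom{2l}{l},$$
with $\varepsilon=1$, $l=(n+1)/3$ when $3\mid n+1$, and $\varepsilon=0$ otherwise. To finish, I would substitute the two-line formula for $d_{n+1}$ from Proposition 3.1 (according to whether $n+1$ is even or odd), and in the two cases with $3\mid n+1$ set $n+1=6k$ (so $l=2k$) or $n+1=3(2k-1)$ (so $l=2k-1$); expanding $4l\,C_{2l-1}\binom{2l}{l}$ then produces $8k\,C_{4k-1}\binom{4k}{2k}$ and $4(2k-1)\,C_{4k-3}\binom{4k-2}{2k-1}$ respectively, reproducing the four displayed cases.

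The step I expect to be the real content — the rest being bookkeeping — is the reduction: one must check that $X$ genuinely is the diagonal product $\mathcal T\times\mathcal G$ with no hidden compatibility between triangulation and gluing; that $\mathrm{Fix}_X(g)$ coincides with the configurations invariant under all of $\langle g\rangle$, so that Proposition 2.1 applies; and that the ``one marked side'' counts in Corollaries 2.1, 2.2 and Propositions 3.1, 3.2 are precisely the numbers of symmetric configurations on the rigidly labelled polygon. One should also observe that no configuration is fixed by both an order-$2$ and an order-$3$ rotation (such a configuration would carry a $\mathbb{Z}_6$-symmetric triangulation, excluded by Proposition 2.1), which confirms that the contributions are simply added. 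Granting all this, the theorem follows by the elementary case analysis above.
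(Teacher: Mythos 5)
Your proposal is correct and follows exactly the route the paper intends: its entire Section 4 proof is the single line ``It remains to apply Burnside formula,'' with the identity contributing $C_{n+1}C_{2n}$, Proposition 2.1 restricting the relevant rotations to orders $2$ and $3$, and Corollaries 2.1, 2.2 together with Propositions 3.1, 3.2 supplying the fixed-point counts $(n+1)C_n\,d_{n+1}$ and $2\cdot 2l\,C_{2l-1}\binom{2l}{l}$. You have in fact supplied more detail than the paper does, including the independence of the triangulation and gluing coordinates and the case bookkeeping, all of which checks out.
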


\begin{ex} If $2n=4$ then there are $16$ tree-rooted planar cubic
maps. Here are they:
\[\begin{picture}(340,40) \qbezier(0,20)(0,35)(15,35)
\qbezier(0,20)(0,5)(15,5) \put(15,5){\line(0,1){30}}
\put(60,20){\oval(20,20)} \linethickness{0.6mm}
\qbezier(15,35)(30,35)(30,20) \qbezier(15,5)(30,5)(30,20)
\qbezier(30,20)(40,20)(50,20)

\thinlines \qbezier(90,20)(90,35)(105,35)
\qbezier(90,20)(90,5)(105,5) \qbezier(105,5)(120,5)(120,20)
\put(150,20){\oval(20,20)} \linethickness{0.6mm}
\qbezier(105,35)(120,35)(120,20) \qbezier(120,20)(130,20)(140,20)
\qbezier(105,5)(105,20)(105,35)

\thinlines \qbezier(180,20)(180,35)(195,35)
\qbezier(180,20)(180,5)(195,5) \qbezier(195,35)(210,35)(210,20)
\put(240,20){\oval(20,20)} \linethickness{0.6mm}
\qbezier(195,5)(210,5)(210,20) \qbezier(210,20)(220,20)(230,20)
\qbezier(195,5)(195,20)(195,35)

\thinlines \qbezier(285,5)(300,5)(300,20)
\put(285,5){\line(0,1){30}} \put(330,20){\oval(20,20)}
\linethickness{0.6mm} \qbezier(270,20)(270,35)(285,35)
\qbezier(285,35)(300,35)(300,20) \qbezier(300,20)(310,20)(320,20)
\qbezier(270,20)(270,5)(285,5)
\end{picture}\]
\[\begin{picture}(330,50)  \put(15,10){\line(0,1){30}}
\put(60,25){\oval(20,20)} \qbezier(15,40)(30,40)(30,25)
\linethickness{0.6mm} \qbezier(0,25)(0,40)(15,40)
\qbezier(0,25)(0,10)(15,10) \qbezier(15,10)(30,10)(30,25)
\qbezier(30,25)(40,25)(50,25)

\thinlines \put(100,10){\line(0,1){30}}
\put(140,10){\line(0,1){30}} \qbezier(140,10)(160,25)(140,40)
\linethickness{0.6mm} \qbezier(100,10)(80,25)(100,40)
\qbezier(100,40)(120,50)(140,40) \qbezier(100,10)(120,0)(140,10)

\thinlines \put(190,10){\line(0,1){30}}
\qbezier(190,10)(210,0)(230,10) \put(230,10){\line(0,1){30}}
\linethickness{0.6mm} \qbezier(190,10)(170,25)(190,40)
\qbezier(190,40)(210,50)(230,40) \qbezier(230,10)(250,25)(230,40)

\thinlines \put(280,10){\line(0,1){30}}
\qbezier(320,10)(340,25)(320,40) \qbezier(280,10)(300,0)(320,10)
\linethickness{0.6mm} \qbezier(280,10)(260,25)(280,40)
\qbezier(280,40)(300,50)(320,40) \qbezier(320,10)(320,25)(320,40)
\end{picture}\]
\[\begin{picture}(340,60) \put(50,15){\line(0,1){30}}
\qbezier(10,15)(-10,30)(10,45) \qbezier(10,15)(30,5)(50,15)
\linethickness{0.6mm} \qbezier(10,45)(30,55)(50,45)
\qbezier(50,15)(70,30)(50,45) \qbezier(10,15)(10,30)(10,45)

\thinlines \put(100,5){\line(1,0){60}} \put(100,5){\line(3,5){30}}
\put(160,5){\line(-3,5){30}} \linethickness{0.6mm}
\qbezier(100,5)(115,15)(130,25) \qbezier(160,5)(145,15)(130,25)
\qbezier(130,25)(130,40)(130,55)

\thinlines \put(190,5){\line(3,5){30}}
\put(250,5){\line(-3,2){30}} \put(220,25){\line(0,1){30}}
\linethickness{0.6mm} \qbezier(190,5)(220,5)(250,5)
\qbezier(190,5)(205,15)(220,25) \qbezier(250,5)(235,30)(220,55)

\thinlines \put(340,5){\line(-3,5){30}}
\put(280,5){\line(3,2){30}} \put(310,25){\line(0,1){30}}
\linethickness{0.6mm} \qbezier(280,5)(310,5)(340,5)
\qbezier(280,5)(295,30)(310,55) \qbezier(340,5)(325,15)(310,25)
\end{picture}\]
\[\begin{picture}(330,50) \put(10,10){\oval(20,20)}
\put(70,10){\oval(20,20)} \put(40,40){\oval(20,20)}
\linethickness{0.6mm} \put(20,10){\line(1,0){40}}
\put(40,10){\line(0,1){20}}

\thinlines \put(120,25){\oval(20,20)}
\put(165,25){\oval(30,30)[t]} \put(210,25){\oval(20,20)}
\linethickness{0.6mm} \put(130,25){\line(1,0){20}}
\put(180,25){\line(1,0){20}} \qbezier(150,25)(150,10)(165,10)
\qbezier(165,10)(180,10)(180,25)

\thinlines \put(260,25){\oval(20,20)}
\put(310,25){\oval(40,40)[t]} \put(310,25){\oval(20,20)}
\linethickness{0.6mm} \put(270,25){\line(1,0){20}}
\put(320,25){\line(1,0){10}} \qbezier(290,25)(290,5)(310,5)
\qbezier(310,5)(330,5)(330,25) \end{picture}\]
\[\begin{picture}(80,50) \put(10,25){\oval(20,20)}
\put(60,25){\oval(40,40)[b]} \put(60,25){\oval(20,20)}
\linethickness{0.6mm} \put(20,25){\line(1,0){20}}
\put(70,25){\line(1,0){10}} \qbezier(40,25)(40,45)(60,45)
\qbezier(60,45)(80,45)(80,25) \end{picture}\]
\end{ex}

\section{Appendix. Planar cubic maps with six vertices}
\pn We give the list of pairwise non isomorphic planar cubic maps
with 6 vertices --- there are $26$ of them. The group of
automorphisms of a map (if this group is non trivial) is also
presented.
\[\begin{picture}(340,50) \put(0,22){\small 1)}
\put(40,25){\oval(40,40)} \put(40,25){\oval(22,22)}
\put(20,25){\line(1,0){9}} \put(40,25){\oval(10,10)}
\put(45,25){\line(1,0){6}} \put(60,25){\line(1,0){10}}
\put(77,25){\oval(14,14)} \put(92,22){\small $\mathbb{Z}_2$}

\put(108,22){\small 2)} \multiput(130,25)(24,0){4}{\oval(14,14)}
\multiput(137,25)(24,0){3}{\line(1,0){10}} \put(220,22){\small
$\mathbb{Z}_2$}

\put(240,22){\small 3)} \multiput(262,25)(24,0){2}{\oval(14,14)}
\multiput(269,25)(24,0){2}{\line(1,0){10}}
\put(318,25){\oval(30,30)} \put(318,25){\oval(14,14)}
\put(325,25){\line(1,0){8}} \end{picture}\]

\[\begin{picture}(340,35) \put(0,18){\small 4)}
\put(20,20){\oval(10,10)} \put(50,20){\oval(10,10)}
\put(25,20){\line(1,0){20}} \put(35,20){\line(0,1){10}}
\put(35,35){\oval(10,10)} \put(55,20){\line(1,0){10}}
\put(70,20){\oval(10,10)}

\put(90,18){\small 5)} \put(110,20){\oval(10,10)}
\put(150,20){\oval(30,30)} \put(115,20){\line(1,0){20}}
\put(125,20){\line(0,1){10}} \put(125,35){\oval(10,10)}
\put(150,20){\oval(10,10)} \put(155,20){\line(1,0){10}}

\put(180,18){\small 6)} \multiput(200,20)(24,0){3}{\oval(14,14)}
\multiput(207,20)(24,0){2}{\line(1,0){10}}
\put(248,13){\line(0,1){14}}

\put(265,18){\small 7)} \put(290,20){\oval(20,20)[l]}
\multiput(290,10)(10,0){3}{\line(0,1){20}}
\multiput(290,10)(0,20){2}{\line(1,0){20}}
\put(310,20){\oval(20,20)[r]} \put(330,18){\small $\mathbb{Z}_2$}
\end{picture}\]

\[\begin{picture}(340,40) \put(0,18){\small 8)}
\put(25,20){\oval(14,14)} \put(32,20){\line(1,0){10}}
\put(25,13){\line(0,1){14}} \put(57,20){\oval(30,30)}
\put(57,20){\oval(10,10)} \put(62,20){\line(1,0){10}}

\put(85,18){\small 9)} \put(110,20){\oval(14,14)}
\put(117,20){\line(1,0){20}} \put(110,13){\line(0,1){14}}
\put(142,20){\oval(10,10)} \put(127,20){\line(0,1){10}}
\put(127,35){\oval(10,10)}

\put(155,18){\small 10)} \multiput(180,20)(24,0){3}{\oval(14,14)}
\multiput(187,20)(24,0){2}{\line(1,0){10}}
\put(204,13){\line(0,1){14}} \put(245,18){\small $\mathbb{Z}_2$}

\put(265,18){\small 11)} \put(290,20){\oval(14,14)}
\put(290,13){\line(0,1){14}} \put(297,20){\line(1,0){10}}
\put(314,20){\oval(14,14)} \put(314,13){\line(0,1){14}}
\put(330,18){\small $\mathbb{Z}_2$}
\end{picture}\]

\[\begin{picture}(340,40) \put(0,18){\small 12)}
\put(25,20){\oval(10,10)} \put(30,20){\line(1,0){40}}
\put(40,20){\line(0,1){10}} \put(60,20){\line(0,1){10}}
\put(75,20){\oval(10,10)} \put(40,35){\oval(10,10)}
\put(60,35){\oval(10,10)} \put(90,18){\small $\mathbb{Z}_2$}

\put(110,18){\small 13)} \multiput(135,20)(20,0){3}{\oval(10,10)}
\multiput(140,20)(20,0){2}{\line(1,0){10}}
\put(155,25){\line(0,1){10}} \put(155,40){\oval(10,10)}
\put(190,18){\small $\mathbb{Z}_3$}

\put(210,18){\small 14)} \put(250,20){\oval(40,30)[l]}
\put(230,20){\line(1,0){6}} \put(241,20){\oval(10,10)}
\put(250,5){\line(0,1){30}} \put(250,20){\oval(20,30)[r]}
\put(260,20){\line(1,0){10}} \put(275,20){\oval(10,10)}

\put(290,18){\small 15)} \put(325,15){\oval(20,20)[b]}
\put(315,15){\line(0,1){10}} \put(335,15){\line(0,1){10}}
\put(315,15){\line(1,0){20}} \put(315,25){\line(1,0){20}}
\put(325,25){\oval(20,20)[t]} \put(325,35){\line(0,1){5}}
\put(325,45){\oval(10,10)}
\end{picture}\]

\[\begin{picture}(340,40) \put(0,18){\small 16)}
\put(40,20){\oval(40,30)[l]} \put(40,5){\line(0,1){30}}
\put(40,20){\line(-1,0){6}} \put(29,20){\oval(10,10)}
\put(40,20){\oval(20,30)[r]} \put(50,20){\line(1,0){10}}
\put(65,20){\oval(10,10)} \put(80,16){\small $\mathbb{Z}_2$}

\put(100,18){\small 17)} \put(135,20){\oval(30,30)}
\put(135,20){\oval(10,10)} \put(135,5){\line(0,1){10}}
\put(135,25){\line(0,1){10}} \put(150,20){\line(1,0){10}}
\put(165,20){\oval(10,10)}

\put(183,18){\small 18)} \put(208,20){\oval(10,10)}
\put(213,20){\line(1,0){8}} \put(236,20){\oval(30,30)}
\put(236,5){\line(0,1){10}} \put(236,20){\oval(10,10)}
\put(251,20){\line(1,0){8}} \put(264,20){\oval(10,10)}

\put(280,18){\small 19)} \put(310,20){\oval(20,20)}
\put(320,20){\line(1,0){10}} \put(335,20){\oval(10,10)}
\put(310,10){\line(0,1){20}} \put(300,20){\line(1,0){10}}
\end{picture}\]

\[\begin{picture}(340,40) \put(0,18){\small 20)}
\put(45,15){\oval(20,20)[b]} \put(35,15){\line(1,0){20}}
\put(35,15){\line(0,1){20}} \put(55,15){\line(0,1){20}}
\put(35,25){\line(-1,0){5}} \put(25,25){\oval(10,10)}
\put(55,25){\line(1,0){5}} \put(65,25){\oval(10,10)}
\put(45,35){\oval(20,20)[t]}

\put(92,18){\small 21)} \put(137,15){\oval(20,20)[b]}
\put(127,15){\line(1,0){20}} \put(127,15){\line(0,1){20}}
\put(147,15){\line(0,1){20}} \put(127,25){\line(-1,0){5}}
\put(117,25){\oval(10,10)} \put(147,25){\line(-1,0){5}}
\put(137,25){\oval(10,10)} \put(137,35){\oval(20,20)[t]}
\put(157,18){\small $\mathbb{Z}_2$}

\put(189,18){\small 22)} \put(219,15){\oval(20,20)[b]}
\put(209,15){\line(1,0){20}} \put(209,15){\line(0,1){20}}
\put(229,15){\line(0,1){20}} \put(209,25){\line(1,0){5}}
\put(219,25){\oval(10,10)} \put(229,25){\line(1,0){5}}
\put(239,25){\oval(10,10)} \put(219,35){\oval(20,20)[t]}
\put(254,18){\small $\mathbb{Z}_2$}

\put(285,18){\small 23)} \put(330,15){\oval(20,20)[b]}
\put(320,15){\line(1,0){20}} \put(320,15){\line(0,1){20}}
\put(340,15){\line(0,1){20}} \put(320,25){\line(-1,0){5}}
\put(310,25){\oval(10,10)} \put(320,35){\line(1,0){20}}
\put(330,35){\oval(20,20)[t]} \end{picture}\]

\[\begin{picture}(300,30) \put(0,13){\small 24)}
\put(35,15){\oval(30,30)} \put(35,15){\oval(10,10)}
\put(20,15){\line(1,0){10}} \put(35,20){\line(0,1){10}}
\put(40,15){\line(1,0){10}} \put(60,13){\small $S_3$}

\put(80,13){\small 25)} \put(110,15){\oval(20,20)[l]}
\put(110,5){\line(0,1){20}} \put(110,5){\line(1,0){20}}
\put(110,25){\line(1,0){20}} \put(130,15){\oval(20,20)[r]}
\put(130,5){\line(0,1){20}} \put(130,15){\line(1,0){10}}
\put(150,13){\small $\mathbb{Z}_2$}

\put(175,13){\small 26)} \put(210,15){\oval(20,20)[l]}
\put(210,5){\line(0,1){20}} \put(210,5){\line(1,0){40}}
\put(210,25){\line(1,0){40}} \put(250,5){\line(0,1){20}}
\put(250,15){\oval(20,20)[r]} \put(230,25){\oval(20,20)[t]}
\put(270,13){\small $S_3$} \end{picture}\]

\vspace{5mm}

\begin{thebibliography}{99}
\bibitem{AP} Albenque M. and Poulalhon D., \emph{Generic method
for bijections between blossoming trees and planar maps}, arXiv:
1305.1312.
\bibitem{Be} Bernardi O., \emph{Bijective counting of tree-rooted
maps and shuffles of parenthesis systems}, arXiv:math/0601684.
\bibitem{GJ} Goulden I.P. and Jackson D.M., \emph{The KP hierarchy,
branched covers, and triangulations}, arXiv: 0803.3980.
\bibitem{Ko} Kochetkov Yu., \emph{On enumeration of tree-rooted
planar cubic maps}, arXiv: 1608.02510.
\bibitem{LZ} Lando S. and Zvonkin A., \emph{Graphs on surfaces and
their applications. Encyclopedia of mathematical sciences}
\textbf{141}, Springer-Verlag, Berlin, 2004.
\bibitem{Mu} Mullin R.C., \emph{On the enumeration of tree-rooted
maps}, Canad. J. Math., 1967, 19, 174-183.
\bibitem{PS} Poulalhon D. and Schaeffer G., \emph{Optimal coding
and sampling of triangulations}, Algorithmica, 2006, 46(3),
505-527.
\bibitem{St} Stanley R.P., \emph{Enumerative combinatorics, volume
2}, Wadsworth \& Brooks, 1999.
\bibitem{Tu} Tutte W.T., \emph{A census of planar triangulations},
Canad. J. Math., 1962, 14, 21-38.
\end{thebibliography}
\end{document}